\theoremstyle{plain}
\newtheorem{thm}{Theorem}[section]
\theoremstyle{plain}
\newtheorem{lem}[thm]{Lemma}
\theoremstyle{plain}
\newtheorem{prop}[thm]{Proposition}
\theoremstyle{plain}
\newtheorem{cor}[thm]{Corollary}
\theoremstyle{definition}
\theoremstyle{remark}
\DeclareMathAlphabet{\mathpzc}{OT1}{pzc}{m}{it}
\newcommand{\binsubseteq}{\mathbin{\text{\rotatebox[origin=c]{270}{$\subseteq$}}‌​}}
\newcommand{\bundle}[1]{\CMcal{#1}}
\newcommand{\C}{\mathbbm{C}}
\newcommand{\Q}{\mathbbm{Q}}
\renewcommand{\H}{\mathscr{H}}
\newcommand{\HK}{\mathcal{H}}
\newcommand{\Ho}{\mathfrak{H}}
\newcommand{\Tr}{\mathpzc{T}}
\newcommand{\Z}{\mathbbm{Z}}
\begin{document}
\title[Novikov conjecture and Hirzebruch L--class]{Remark on the Novikov conjecture and the homotopy invariance of the Hirzebruch L--class}
\author{Mario Listing}

\begin{abstract}
We use K--area homology to summarize some results about the Novikov conjecture and the Hirzebruch $L$--class. In fact, we provide necessary and sufficient conditions for closed manifolds to have a homotopy invariant $L$--class. In order to obtain additional properties we also introduce the cohomology of infinite K--area which preserves the cup product of singular cohomology. 
\end{abstract}
\keywords{Novikov conjecture, Hirzebruch $L$--class, K--area homology}
\subjclass[2010]{57R19, 57R20, 55R40}
\maketitle

\section{Introduction}
The purpose of this article is to give an alternative approach to the Novikov conjecture. We ignore results about the Baum--Connes assembly map which lead to the strong Novikov conjecture. The definition of the K--area homology in \cite{List10} and most of the results in this paper are motivated by Gromov's work \cite{Gr01}. Remember that the K--area homology is determined by the curvature of complex vector bundles, is easy to compute in many cases and has plenty of interesting properties (cf.~\cite{List10,pre_List12}). We denote by
\[
L(M)=1+L_1(M)+L_2(M)+\cdots \in H^{4*}(M;\Q )
\]
the total Hirzebruch $L$--class of the tangent bundle of $M$. We say that $L(M)$ is a \emph{homotopy invariant of} $M$, if $L(M')=f^*L(M)$ holds for any homotopy equivalence $f:M'\to M$. We use the analogous definition for the class  $L_k(M)\in H^{4k}(M;\Q )$. Note that the homotopy invariance of $L(M)$ is equivalent to the homotopy invariance of all rational Pontryagin classes $p_k(M)\in H^{4k}(M;\Q )$. Novikov proved in \cite{Nov0} the invariance of the $L$--class under homeomorphisms  and presented in \cite{Novikov} counterexamples to its homotopy invariance. Eventually this lead to the conjecture about the oriented homotopy invariance of the higher signatures. The \emph{Novikov conjecture for a discrete group} $\pi $ asserts
\[
h_*(L(M)\cap [M])=(h\circ f)_*(L(M')\cap [M'])\in H_*(B\pi ;\Q )
\]
for all orientation preserving homotopy equivalences $f:M'\to M $ between closed oriented manifolds and continuous maps $h:M\to B\pi $. In this case and below, $B\pi =K(\pi ,1)$ denotes the classifying space of the discrete group $\pi $. Notice that the numbers
\[
\left< \alpha ,h_*(L(M)\cap [M])\right> =\left< h^*\alpha \cup L(M),[M]\right>
\]
are the \emph{higher signatures} of $M$ associated to $h:M\to B\pi $ and $\alpha \in H^*(B\pi ;\Q )$.  

As in \cite{List10, pre_List12} we denote by $\H _*(M;\Q )$ the rational K--area homology of a compact smooth manifold $M$ and extend the functor $\H _*$ in the usual way to the category of topological spaces (cf.~\cite[theorem 6.4]{List10} or section \ref{sec2}).  
\begin{thm}
\label{mainthm}
\begin{enumerate}[(1)]
\item A discrete group $\pi $ with $\H _*(B\pi ;\Q )=0$ satisfies the Novikov conjecture. 
\item Let $M^n$ be an oriented closed manifold with $\H _{n-4k}(M;\Q )=0$ for some $k>0$, then $L_k(M)$  is a homotopy invariant of $M$. 
\item If $M^n $ is an oriented closed manifold with $\H _*(B\pi _1(M);\Q )=0$, then $L(M)$ is a homotopy invariant of $M$ if and only if $\H _{n-4k}(M;\Q )=0$ for all $k>0$.
\end{enumerate}
\end{thm}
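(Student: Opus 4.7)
My plan is to handle the two implications separately. The \emph{if} direction is immediate: under the hypothesis $\H_{n-4k}(M;\Q)=0$ for every $k>0$, part (2) applied to each $k$ gives homotopy invariance of $L_k(M)$, and hence of the total class $L(M)=1+\sum_{k\ge 1}L_k(M)$. Observe that the hypothesis on $\pi_1(M)$ is not used for this direction; its role is reserved for the converse.

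For the \emph{only if} direction, I would assume $L(M)$ homotopy invariant and try to deduce $\H_{n-4k}(M;\Q)=0$ for every $k>0$. The hypothesis $\H_*(B\pi_1(M);\Q)=0$ enters via part (1): it supplies the Novikov conjecture for $\pi=\pi_1(M)$, so that all higher signatures of $M$ along the classifying map $c:M\to B\pi_1(M)$ are automatically homotopy invariants. I would then argue by contradiction: starting from a hypothetical non-zero $\alpha\in\H_{n-4k}(M;\Q)$, I would construct a homotopy equivalence $f:M'\to M$ with $f^*L_k(M)\neq L_k(M')$, violating homotopy invariance of $L$.

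The construction of such an $f$ should exploit the fact that $c_*\alpha=0$ in $\H_*(B\pi_1(M);\Q)$, so that $\alpha$ is ``internal to $M$'' and does not factor through $B\pi_1(M)$. I would then pair $\alpha$ with a suitable class in the cohomology of infinite K-area introduced in this paper, using its cup-product compatibility with singular cohomology, to produce a non-vanishing pairing on $M$ that is sensitive to the Pontryagin-class difference $f^*L_k(M)-L_k(M')$ realised by a surgery modification in the Sullivan--Wall sense. Homotopy invariance of $L(M)$ would then contradict this non-vanishing and force $\alpha=0$.

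The main obstacle will be precisely this converse construction. It requires more than a formal map $\H_*\to H_*$ or an abstract pairing: one needs a genuine mechanism transporting a prescribed K-area class into a realizable surgery obstruction. Verifying that the cup-product-preserving infinite K-area cohomology yields a non-degenerate pairing on the classes that actually correspond to surgery modifications, and that these modifications do change $L_k$ in the predicted way, is where I expect the real work to lie; the introduction of the infinite K-area cohomology in the paper appears to be motivated precisely by the need for this compatibility.
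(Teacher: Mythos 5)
Your handling of the \emph{if} direction of (3) and of the role of the $\pi_1$-hypothesis is consistent with the paper, but the proposal has two genuine gaps. First, you use parts (1) and (2) as black boxes, yet they are part of the statement to be proved. The paper derives both from the Hilsum--Skandalis index theorem: for a given orientation preserving homotopy equivalence $f:M'\to M$ and metric there is an $\epsilon >0$ such that every Hermitian bundle $\bundle{E}$ with $\| R^{\bundle{E}}\| <\epsilon $ satisfies $\mathrm{ind}(D^M_{\bundle{E}})=\mathrm{ind}(D^{M'}_{f^*\bundle{E}})$. Pairing the resulting identity with Chern characters of such almost flat bundles (and stabilizing by tori to handle parity) shows $L(M)\cap [M]-f_*(L(M')\cap [M'])\in \H ^{rs}_{n-4*}(M;\Q )$, from which (2) follows when $\H _{n-4k}(M;\Q )=0$, and (1) follows by pushing forward to $B\pi $ where $\HK ^*_{rs}(B\pi ;\Q )=H^*(B\pi ;\Q )$. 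None of this analytic input appears in your proposal.

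Second, the converse of (3) is exactly the step you defer, and the mechanism you sketch for it is backwards. You propose to pair a nonzero $\alpha \in \H _{n-4k}(M;\Q )$ with a class in the cohomology of infinite K--area to produce a non-vanishing pairing detecting the difference $f^*L_k(M)-L_k(M')$; but by definition $\HK ^k(M;\Q )=\H _k(M;\Q )^{\perp }$, so every such pairing vanishes identically --- indeed the forward direction shows precisely that the $L$--class difference is \emph{invisible} to all infinite K--area classes. The infinite K--area cohomology constrains where the difference can live; it cannot certify that a prescribed class is realized. What the paper actually invokes for the converse is a realization theorem of Davis: if $\H _*(B\pi _1(M);\Q )=0$, then every $\theta \in \bigoplus _{k>0}\H _{n-4k}(M;\Q )$ has a nonzero multiple $r\Z \cdot \theta $ contained in $\Gamma _*M$, the set of differences $f_*(L(M')\cap [M'])-h_*(L(M'')\cap [M''])$ over smooth homotopy equivalences. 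This surgery-theoretic input is the entire content of the converse; without it, or an equivalent statement extracted from the Sullivan--Wall surgery exact sequence identifying which variations of the $L$--class are realizable, your argument does not close.
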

Note the if and only if in (3), in fact $\H _{n-4k}(M;\Q )\neq 0$ for some $k>0$ yields a (smooth) homotopy equivalence $f:M'\to M$ with $L_k(M')\neq f^*L_k(M)$. In order to show part (3) we use a result by Davis \cite{Davis} which is well known to experts in case $M$ is simply connected. Notice that simply connected closed manifolds $M$ satisfy $\H _k(M;\Q )=H_k(M;\Q )$ for all $k>0$. We can show  $\H _*(B\pi ;\Q )=0$ for plenty of discrete groups $\pi $, but have no example with $\H _*(B\pi ;\Q )\neq 0$. However, vanishing of $\H _*(B\pi ;\Q )$ is unsettled for many groups $\pi $. Claims (1) and (2) in the theorem follow from a result by Hilsum and Skandalis \cite{HilSka}. Claim (1) can be improved by a straightforward argument of \cite[theorem 3.9]{Hank}, in fact Hanke's theorem implies the strong Novikov conjecture for $\pi $ if $\H _*(B\pi ;\Q )=0$. Hence, $\H _*(B\pi ;\Q )=0$ may be called the \emph{very strong Novikov conjecture} for $\pi $. Observe that plenty of non--aspherical closed manifolds $M$ with $\H _*(M;\Q )=0$ exist. Since $\H _0(M;\Q )$ and $\H _1(M;\Q )$ vanish for all manifolds, theorem \ref{mainthm}(2) includes two classic results: the oriented homotopy invariance of the signature if $\dim M=4k$ and moreover, the homotopy invariance of the class $L_{k}(M)$ if $\dim M=4k+1$ (cf.~\cite{Novikov}). A more subtle argument than theorem \ref{mainthm}(3) yields the following result. 
\begin{cor}
\label{main_cor}
If $M^n$ is oriented and closed with $\H _{*}(B\pi _1(M);\Q )=0$ and $\dim \H _{n-4k}(M;\Q )=1$, $L_k(M)\cap [M]\in \H _{n-4k}(M;\Q )$ for some $k>0$, then there are infinitely many distinct smooth manifolds with the homotopy type of $M$.
\end{cor}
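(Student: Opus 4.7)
The plan has two parts: first, produce an infinite family of smooth homotopy equivalences $f_j:M_j\to M$ ($j\in\N$) whose images $f_{j\ast}(L_k(M_j)\cap[M_j])$ in $\H_{n-4k}(M;\Q)$ are pairwise distinct; second, argue that such a family must realize infinitely many diffeomorphism types.

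For the first part I would invoke the same input from \cite{Davis} that underlies the only-if direction of theorem \ref{mainthm}(3). The remark following theorem \ref{mainthm} already guarantees one smooth homotopy equivalence with nonzero $L_k$-discrepancy when $\H_{n-4k}(M;\Q)\neq 0$; Davis's realization construction should be flexible enough that, combined with the one-dimensionality of $\H_{n-4k}(M;\Q)$ and the fact that $L_k(M)\cap[M]$ generates it, the construction can be scaled to produce $(M_j,f_j)$ with $f_{j\ast}(L_k(M_j)\cap[M_j])-L_k(M)\cap[M]$ representing $j\cdot\zeta\in\H_{n-4k}(M;\Q)\cong\Q$ for a fixed generator $\zeta$ and every $j\in\N$.

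For the second part, suppose for contradiction only finitely many of the $M_j$ are pairwise non-diffeomorphic. By pigeonhole there exist a smooth manifold $N$ and an infinite subsequence with $M_{j_\ell}\cong N$; composing $f_{j_\ell}$ with these diffeomorphisms yields homotopy equivalences $g_\ell:N\to M$ whose values $g_{\ell\ast}(L_k(N)\cap[N])$ are pairwise distinct in $\H_{n-4k}(M;\Q)$. Fixing $g_0$ and writing $g_\ell=g_0\circ\psi_\ell$ with $\psi_\ell$ an orientation-preserving self-homotopy-equivalence of $N$, these values lie in $g_{0\ast}(\mathcal O)$, where $\mathcal O$ is the $\mathrm{Aut}(N)$-orbit of $L_k(N)\cap[N]$ in $\H_{n-4k}(N;\Q)$. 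Since $\H_\ast$ is functorial and supports a finitely generated integer refinement with $\H_{n-4k}(N;\Z)/(\text{torsion})\cong\Z$ (as $\dim_\Q\H_{n-4k}(N;\Q)=\dim_\Q\H_{n-4k}(M;\Q)=1$ by homotopy invariance of $\H_\ast$), the group $\mathrm{Aut}(N)$ acts on this rank-one lattice through $\mathrm{Aut}_\Z(\Z)=\{\pm 1\}$; hence $|\mathcal O|\le 2$, contradicting the existence of infinitely many distinct values.

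The main obstacle is the first step: extracting from \cite{Davis} an infinite family realizing prescribed, pairwise distinct $\H_{n-4k}$-discrepancies, rather than just the single example recorded in the remark after theorem \ref{mainthm}. A secondary point is confirming that $\H_\ast$ admits the integer refinement used in the orbit argument, which should follow formally from the constructions of \cite{List10,pre_List12}, but needs to be checked.
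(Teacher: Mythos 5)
Your proposal is correct and follows essentially the same route as the paper: Davis's theorem makes $\Gamma _{n-4k}M$ infinite, and the rank--one integral lattice together with $\mathrm{Gl}_1(\Z )=\Z _2$ bounds the contribution of each diffeomorphism type to finitely many points, forcing infinitely many types. The ``main obstacle'' you flag is not actually one: you need not realize the specific discrepancies $j\cdot \zeta $ relative to the identity, since the quoted theorem of Davis already gives $r\Z \cdot \theta \subset \Gamma _*M$ for any nonzero $\theta \in \H _{n-4k}(M;\Q )$, and infiniteness of the difference set $\Gamma _{n-4k}M$ alone forces infinitely many distinct values $f_*(L_k(M')\cap [M'])$, which is all your pigeonhole step requires.
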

This corollary shows in particular the existence of infinitely many distinct smooth manifolds with the homotopy type of the projective spaces $\mathbbm{Ca}P^2$, $\C P^{m+1}$ and $\mathbbm{H}P^m$ if $m>1$. Examples with infinite fundamental group are $M^n\# T^n$ and $(\mathbbm{R}P^3\# \mathbbm{R}P^3)\times N^4$  if $M$ satisfies the assumptions of the corollary and $|\pi _1(N)|<\infty $.

\section{Cohomology of infinite K--area}
\label{sec2}
The cohomology of infinite K--area is complementary to the homology of finite K--area. The advantage of the cohomology version is the additional ring structure whereas many results are better stated within the homology version. Let $(M,g)$ be a compact Riemannian manifold and $\mathscr{V}_{\epsilon ,g}(M)$ be the set of finite dimensional Hermitian vector bundles $(\bundle{E},\nabla )\to M$ with curvature $\| R^\bundle{E}\| _g<\epsilon $ where $\| .\| _g$ is the $L^\infty $--operator norm on $\Lambda ^2TM\otimes \mathrm{End}(\bundle{E})$. We denote by $H^{2*}_{\epsilon ,g}(M;\Q )\subseteq H^{2*}(M;\Q )$ the vector subspace which is generated by the Chern characters $\mathrm{ch}(\bundle{E})\in H^{2*}(M;\Q )$ of all bundles $(\bundle{E},\nabla )\in \mathscr{V}_{\epsilon ,g}(M)$. The \emph{cohomology of infinite K--area} of $M$ is defined by
\[
\begin{split}
\HK ^{2*}(M;\Q )&=\bigcap _{\epsilon >0}H_{\epsilon ,g}^{2*}(M;\Q )=\lim _{\epsilon \to 0}H^{2*}_{\epsilon ,g}(M;\Q )\\
\HK ^{2*+1}(M;\Q )&=\left\{ \alpha \in H^{2*+1}(M;\Q )\ |\ \alpha \times [S^1]^*\in \HK ^{2*} (M\times S^1;\Q )\right\} 
\end{split}
\]
At first we observe that $\HK ^{*}(M;\Q )$ depends only on the homotopy type of $M$ and not on the choice of the Riemannian metric $g$. In fact, if $f:M\to N$ is continuous, the pull back of (singular) cohomology classes provides a linear map $f^*:\HK ^{k}(N;\Q )\to \HK ^{k}(M;\Q )$ for all $k$. Moreover, if $h:M\to N$ is homotopic to $f$, then $h^*=f^*$. Hence, $\HK ^{*}(\, .\, ;\Q )$ (respectively $\HK ^{j}(\, .\, ;\Q )$) is a cofunctor on the category of compact smooth manifolds and continuous maps into the category of rational vector spaces which satisfies the homotopy axiom. Clearly, $\HK ^*(\{ pt\} ;\Q )=H^0(\{ pt\} ;\Q )=\Q $ and $\HK ^*(M\coprod M';\Q )=\HK ^*(M;\Q )\oplus \HK ^*(M';\Q )$. The following proposition follows immediately from the definitions.
\begin{prop}
The cohomology of infinite K--area is the complement of the homology of finite K--area:
\[
\HK ^k(M;\Q )=\H _k(M;\Q )^{\perp }=\{ \alpha \in H^k(M;\Q )\ |\ \left< \alpha ,\theta \right> =0\ \, \forall \, \theta \in \H _k(M;\Q )\} 
\]
where $\left< .,.\right> $ means the ordinary pairing of singular cohomology and homology (note that $\H _k(M;\Q )$ is a subspace of $H_k(M;\Q )$ by definition).  
\end{prop}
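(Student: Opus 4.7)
The assertion is a duality statement: the subspace $\HK^*(M;\Q)\subseteq H^*(M;\Q)$ is the annihilator of the subspace $\H_*(M;\Q)\subseteq H_*(M;\Q)$ under the Kronecker pairing. Since the proposition is advertised as a consequence of the definitions, my plan is to unwind $\H_*$ as defined in \cite{List10} and reduce everything to the finite-dimensional identity $V^{\perp\perp}=V$, splitting the argument into an even-degree part (pure duality) and an odd-degree part (reduction via $S^1$).

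\textbf{Even degrees.} Recall (or take as the dual of the definition of $\HK^{2*}$) that $\H_{2j}(M;\Q)$ is the union $\bigcup_{\epsilon>0}H^{2j}_{\epsilon,g}(M;\Q)^{\perp}$, where the annihilator is computed inside $H_{2j}(M;\Q)$ via the Kronecker pairing. Because $M$ is a compact manifold, $H^{2j}(M;\Q)$ is finite dimensional, hence each subspace $H^{2j}_{\epsilon,g}(M;\Q)$ is finite dimensional and the identity $V^{\perp\perp}=V$ applies. Taking the annihilator of the union gives
\[
\H_{2j}(M;\Q)^{\perp}=\bigcap_{\epsilon>0}\bigl(H^{2j}_{\epsilon,g}(M;\Q)^{\perp}\bigr)^{\perp}=\bigcap_{\epsilon>0}H^{2j}_{\epsilon,g}(M;\Q)=\HK^{2j}(M;\Q).
\]

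\textbf{Odd degrees.} For $k=2j+1$ I would combine the $S^1$-trick that already appears in the definition of $\HK^{2*+1}$ with the analogous definition of $\H_{2*+1}$ in \cite{List10}, namely $\theta\in\H_{2j+1}(M;\Q)$ iff $\theta\times[S^1]\in\H_{2j+2}(M\times S^1;\Q)$. Under the Künneth pairing one has $\left\langle\alpha\times[S^1]^*,\theta\times[S^1]\right\rangle=\pm\left\langle\alpha,\theta\right\rangle$, which gives $\HK^{2j+1}\subseteq\H_{2j+1}^{\perp}$ at once. Conversely, if $\alpha\in\H_{2j+1}(M;\Q)^{\perp}$, I need $\alpha\times[S^1]^{*}$ to lie in $\HK^{2j+2}(M\times S^1;\Q)$, which by the even case just proved is the annihilator of $\H_{2j+2}(M\times S^1;\Q)$. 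One then pairs $\alpha\times[S^1]^{*}$ against a Künneth-type decomposition of $\H_{2j+2}(M\times S^1;\Q)$ into a piece coming from $\H_{2j+2}(M;\Q)$ (on which $\alpha\times[S^1]^{*}$ vanishes for dimension reasons) and a piece of the form $\H_{2j+1}(M;\Q)\times[S^1]$ (on which it vanishes by hypothesis on $\alpha$).

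\textbf{Main obstacle.} The delicate input is the Künneth-type splitting of $\H_{*}(M\times S^1;\Q)$ that underwrites the odd-degree step; in particular one must argue that every finite-K-area class in $H_{2j+2}(M\times S^1;\Q)$ is a sum of a pull-back class and a $\times[S^1]$ class, which amounts to showing that bundles on $M\times S^1$ with curvature smaller than $\epsilon$ have Chern characters lying in the corresponding subspaces on each Künneth factor, up to a controlled $\epsilon$-loss. Modulo this point, which should be available from \cite{List10}, the proposition is purely the finite-dimensional duality $V^{\perp\perp}=V$ applied to the filtration $\{H^{k}_{\epsilon,g}(M;\Q)\}_{\epsilon>0}$ of $H^{k}(M;\Q)$.
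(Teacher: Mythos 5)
Your unwinding is correct and is precisely what the paper means by ``follows immediately from the definitions'' --- the paper supplies no further argument, and your even/odd split via $V^{\perp\perp}=V$ and the $S^1$--trick is the intended one. The one point you flag as the main obstacle, namely the splitting $\H _{2j+2}(M\times S^1;\Q )=\H _{2j+2}(M;\Q )\oplus \H _{2j+1}(M;\Q )\times [S^1]$, requires no bundle--level curvature analysis: it follows from functoriality of $\H _*$ under the projection $M\times S^1\to M$ and the inclusion $M\hookrightarrow M\times S^1$ together with the fact that $\H _*$ is a linear subspace (and it is recorded in \cite{List10}).
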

\begin{lem}
\label{lemma22}
$\HK ^{2*}(M;\Q )$ is a ring with respect to the cup product in singular cohomology, in fact
\[
\alpha \cup \beta \in \HK ^*(M;\Q )
\]
for all $\alpha \in \HK ^{2*}(M;\Q )$ and $\beta \in \HK ^*(M;\Q )$. Moreover, the subalgebra generated by $H^1(M;\Q )$ is contained in $\HK ^{*}(M;\Q )$.
\end{lem}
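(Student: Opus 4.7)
The central mechanism is the compatibility of the Chern character with tensor products: $\mathrm{ch}(\bundle{E}\otimes\bundle{F})=\mathrm{ch}(\bundle{E})\cup\mathrm{ch}(\bundle{F})$, while the tensor--product connection satisfies $R^{\bundle{E}\otimes\bundle{F}}=R^{\bundle{E}}\otimes\mathrm{id}+\mathrm{id}\otimes R^{\bundle{F}}$ and hence $\|R^{\bundle{E}\otimes\bundle{F}}\|_g\leq \|R^{\bundle{E}}\|_g+\|R^{\bundle{F}}\|_g$. For $\alpha\in\HK^{2k}(M;\Q)$, $\beta\in\HK^{2\ell}(M;\Q)$ and any $\delta>0$, I would expand $\alpha=\sum_i a_i\,\mathrm{ch}(\bundle{E}_i)$ and $\beta=\sum_j b_j\,\mathrm{ch}(\bundle{F}_j)$ with all bundles in $\mathscr{V}_{\delta/2,g}(M)$. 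The tensor products then lie in $\mathscr{V}_{\delta,g}(M)$ and $\alpha\cup\beta=\sum_{i,j}a_ib_j\,\mathrm{ch}(\bundle{E}_i\otimes\bundle{F}_j)\in H^{2*}_{\delta,g}(M;\Q)$; letting $\delta\downarrow 0$ gives $\alpha\cup\beta\in\HK^*(M;\Q)$ in the even--even case.

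To extend to $\beta\in\HK^{2\ell+1}(M;\Q)$, I would suspend by $S^1$. By definition $\beta\times[S^1]^*\in\HK^{2\ell+2}(M\times S^1;\Q)$, and by the functoriality already recorded $\pi_M^*\alpha\in\HK^{2k}(M\times S^1;\Q)$. The even case on $M\times S^1$ yields $\pi_M^*\alpha\cup(\beta\times[S^1]^*)\in\HK^*(M\times S^1;\Q)$, and by associativity of the cup product this class equals $(\alpha\cup\beta)\times[S^1]^*$; the defining property of $\HK^{2k+2\ell+1}(M;\Q)$ then places $\alpha\cup\beta$ in $\HK^*(M;\Q)$.

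The final clause about the subalgebra generated by $H^1(M;\Q)$ reduces to two ingredients: $\HK^1(M;\Q)=H^1(M;\Q)$, and $[T^m]^*\in\HK^m(T^m;\Q)$ for every $m\geq 1$. The first is immediate from the preceding proposition, since it identifies $\HK^1=\H_1^\perp$ and the introduction has already observed $\H_1(M;\Q)=0$. For the second, given $\alpha_1,\dots,\alpha_m\in H^1(M;\Q)$, I would clear denominators and choose $f_i\colon M\to S^1$ with $f_i^*[S^1]^*$ a rational multiple of $\alpha_i$; assembling $f=(f_1,\dots,f_m)\colon M\to T^m=(S^1)^m$ realises $\alpha_1\cup\cdots\cup\alpha_m$ as a rational multiple of $f^*[T^m]^*$, so by functoriality it is enough to show $[T^m]^*\in\HK^m(T^m;\Q)$, equivalently (via the preceding proposition) $\H_m(T^m;\Q)=0$. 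This last step is the main obstacle: it is the infinite K--area of flat tori, and would be proved by producing a sequence of almost--flat bundles on $T^m$ whose Chern characters generate $H^*(T^m;\Q)$ while the curvature operator norms tend to zero, e.g.\ by rescaling pullbacks of the Poincar\'e line bundles via high--degree coverings of the coordinate $2$--tori. Given this, $\Q$--linearity together with the ring structure of the first two paragraphs closes the subalgebra inside $\HK^*(M;\Q)$.
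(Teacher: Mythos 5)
Your argument follows the paper's proof essentially step for step: tensor products of almost--flat bundles (with $\|R^{\bundle{E}\otimes\bundle{F}}\|_g\leq\|R^{\bundle{E}}\|_g+\|R^{\bundle{F}}\|_g$ and multiplicativity of the Chern character) for the even--even case, suspension by $S^1$ for odd $\beta$, and reduction of the $H^1$--subalgebra claim to $\HK^*(T^m;\Q)=H^*(T^m;\Q)$ via maps $M\to T^m$. The only divergence is that the paper simply cites section 3 of \cite{List10} for the infinite K--area of tori while you sketch a proof; be aware that your sketched mechanism is backwards --- pulling a bundle back along an expanding covering $T^m\to T^m$ \emph{increases} the curvature norm, and the standard device is rather the transfer/direct image of bundles from large coverings --- but since the paper itself treats this fact as a citation, this does not constitute a gap in the proof of the lemma.
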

\begin{proof}
If $\bundle{E},\bundle{F}\in \mathscr{V}_{\epsilon ,g}(M)$, then $\bundle{E}\otimes \bundle{F}\in \mathscr{V}_{2\epsilon ,g}(M)$ for the tensor product connection. Hence, $\alpha ,\beta \in H^{2*}_{\epsilon ,g}(M;\Q )$ implies $\alpha \cup \beta \in H^{2*}_{2\epsilon ,g}(M;\Q )$ which proves the first claim if $\alpha ,\beta $ are even classes. If $\beta \in \HK ^{2*+1}(M;\Q )$, then $\beta \times [S^1]\in \HK ^{2*}_{\epsilon ,g\oplus dt^2}(M\times S^1;\Q )$ for all $\epsilon >0$. Thus, $(\alpha \cup \beta )\times [S^1]\in H_{2\epsilon ,g\oplus dt^2}^{2*}(M\times S^1;\Q )$ for all $\epsilon >0$ if $\alpha \in \HK ^{2*}(M;\Q )$. Let $\alpha \in H^{k}(M;\Q )$ be in the subalgebra generated by $H^1(M;\Q )$, then there is a map $f:M\to T^k$ with $\alpha =x\cdot f^*([T^k]^*)$ for some $x\in \Q $. Since $\HK ^*(T^k;\Q )=H ^*(T^k;\Q )$ (cf.~section 3 in \cite{List10}) we conclude the second claim from functoriallity.
\end{proof}
Remember that the intersection product is not preserved in K--area homology. In order to avoid difficulties with the cup product of odd classes we also introduce two stabilized versions of the cohomology of infinite K--area. 
\begin{enumerate}[(i)]
\item Let $[T^j]^*\in H^j(T^j;\Q )$ be a generator and define the subspaces
\[
\HK _j^{k}(M;\Q ):=\{ \theta \in H^k(M;\Q )\ |\ \theta \times [T^j]^*\in \HK ^{k+j}(M\times T^j;\Q )\} 
\]
then $\HK _j^k(M;\Q )\subseteq \HK _l^k(M;\Q )$ holds for all $j\leq l$. This follows from the last lemma and $\HK ^*(T^j;\Q )=H^*(T^j;\Q )$ for all $j$. Hence,
\[
\HK _{rs}^k(M;\Q )=\lim _{j\to \infty }\HK _j^k(M;\Q )\subseteq H^k(M;\Q )
\]
yields a well defined cofunctor. We call $\HK _{rs}^*(M;Q)$ \emph{ring stabilized cohomology of infinite K--area}.
\item Let $j+k$ be even and $\HK _{j,\epsilon ,g}^k(M;\Q )\subseteq H^k(M;\Q )$ be the subspace generated by all $i^*(\mathrm{ch}_{j+k}(\bundle{E})\cap 1\times [T^j])\in H^k(M;\Q )$ where $\bundle{E}\to M\times T^j$ are Hermitian bundles of curvature $\| R^\bundle{E}\| _{g\oplus h}<\epsilon $ and $i:M\to M\times T^j $ is an inclusion map. Note that $\HK _{j,\epsilon ,g}^k(M;\Q )$ does not depend on $i$ or the choice of the Riemannian metric $h$ on $T^j$. The definitions imply $\HK ^k_j(M;\Q )=\lim \limits _{\epsilon \to 0}\HK ^k_{j,\epsilon ,g}(M;\Q )$. Moreover, a standard exercise (cf.~\cite[prop.~5.1]{List10}) yields $\HK ^k_{j,\delta ,g}(M;\Q )\subseteq \HK _{l,\epsilon ,g}^k(M;\Q )$ if $j\leq l$ and $\delta \leq \epsilon $. Hence, we define the \emph{stabilized cohomology of infinite K--area} by
\[
\HK ^k_{st}(M;\Q )=\lim _{\epsilon \to 0 }\lim _{j\to \infty  }\HK ^k_{\epsilon ,g,j}(M;\Q ).
\]
Notice that $\alpha \in \HK _{j,\epsilon ,g}^k(M;\Q )$, $\beta \in \HK _{l,\epsilon ,g}^n(M;\Q )$ satisfy $\alpha \cup \beta \in \HK ^{k+n}_{j+l,2\epsilon ,g}(M;\Q )$, i.e.~$\HK ^*_{st}(M;\Q )$ becomes a ring with respect to the cup product.
\end{enumerate}
\begin{prop}
\label{prop23}
We have
\[
\HK ^k(M;\Q )\subseteq \HK ^{k}_{rs}(M;\Q )\subseteq \HK ^k_{st}(M;\Q )\subseteq H^k(M;\Q )
\]
for all $k$. Moreover, $\HK ^{2*}(M;\Q )$, $\HK ^*_{rs}(M;\Q )$ and $\HK _{st}^*(M;\Q )$ are rings with respect to the cup product of singular cohomology. The stabilized version $\HK ^*_{st}$ is complementary to the stabilized K--area homology introduced in \cite[section 5]{List10}:
\[
\HK _{st}^k(M;\Q )=\H _k^{st}(M;\Q )^\perp 
\]
where $\perp $ refers to the pairing $\left< .,.\right> $ between singular cohomology and homology. 
\end{prop}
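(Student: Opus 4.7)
The strategy splits into three essentially independent pieces. The chain of inclusions and the ring statements reduce quickly to lemma \ref{lemma22} together with the definitions in items (i)--(ii); the identification with the perpendicular of stabilized K--area homology is the only genuinely new content, and I plan to model it on the non--stabilized duality in the preceding proposition.

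For the inclusions, the outer one $\HK^k_{st}(M;\Q)\subseteq H^k(M;\Q)$ is tautological. To prove $\HK^k(M;\Q)\subseteq \HK^k_{rs}(M;\Q)$, given $\alpha\in\HK^k(M;\Q)$ I pick $j$ of a convenient parity (using $\HK^k_{j}\subseteq\HK^k_{j+1}$) and combine functoriality of $\HK^*$ with lemma \ref{lemma22}: the pullbacks $p_M^*\alpha\in\HK^k(M\times T^j;\Q)$ and $p_{T^j}^*[T^j]^*\in\HK^j(M\times T^j;\Q)$ (the latter because $[T^j]^*$ lies in the subalgebra of $H^*(T^j;\Q)$ generated by $H^1$) cup to $\alpha\times[T^j]^*\in\HK^{k+j}(M\times T^j;\Q)$ once one factor is placed in even degree. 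Hence $\alpha\in\HK^k_j\subseteq\HK^k_{rs}$. The inclusion $\HK^k_{rs}(M;\Q)\subseteq\HK^k_{st}(M;\Q)$ is then formal: by construction $\HK^k_{rs}=\bigcup_j\bigcap_\epsilon\HK^k_{j,\epsilon,g}$ while $\HK^k_{st}=\bigcap_\epsilon\bigcup_j\HK^k_{j,\epsilon,g}$, and a union of intersections always sits inside an intersection of unions.

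The ring statements for $\HK^{2*}$ and $\HK^*_{st}$ are already recorded in lemma \ref{lemma22} and the final sentence of item (ii). For $\HK^*_{rs}$, given $\alpha\in\HK^k_j(M;\Q)$ and $\beta\in\HK^n_l(M;\Q)$, the Künneth formula identifies $(\alpha\cup\beta)\times[T^{j+l}]^*$, up to sign, with the cup product in $H^*(M\times T^{j+l};\Q)$ of the pullbacks of $\alpha\times[T^j]^*$ and $\beta\times[T^l]^*$ under the two evident coordinate projections $M\times T^{j+l}\to M\times T^j$ and $M\times T^{j+l}\to M\times T^l$. Both factors land in $\HK^*(M\times T^{j+l};\Q)$ by functoriality, and after replacing $j$ by $j+1$ if needed to make $k+j$ even, lemma \ref{lemma22} places the product in $\HK^{k+n+j+l}(M\times T^{j+l};\Q)$. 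This translates back to $\alpha\cup\beta\in\HK^{k+n}_{j+l}(M;\Q)\subseteq\HK^{k+n}_{rs}(M;\Q)$.

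For the duality $\HK^k_{st}(M;\Q)=\H_k^{st}(M;\Q)^\perp$, I would imitate the reasoning behind the preceding proposition $\HK^k=\H_k^\perp$, unwinding the definition of stabilized K--area homology from \cite[section 5]{List10}. The key observation is that both $\HK^k_{st}$ and $\H_k^{st}$ are built from exactly the same Hermitian bundles on products $M\times T^j$ of small curvature, so at each fixed $(j,\epsilon)$--stage a finite--dimensional duality via the singular pairing $H^k(M;\Q)\otimes H_k(M;\Q)\to\Q$ is immediate. The main technical step I anticipate is interchanging orthogonal complements with the nested limits $\lim_\epsilon$ (intersection on the cohomology side, union on the homology side) and $\lim_j$ (union on the cohomology side, intersection on the homology side); this is legitimate because the subspaces involved are finite--dimensional at each stage and the singular pairing is perfect on $H^k\otimes H_k$, but it is where I expect the bookkeeping to be most delicate.
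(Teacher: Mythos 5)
Your argument is correct and coincides with what the paper intends: the paper gives no explicit proof of this proposition, presenting it as a summary of lemma \ref{lemma22}, the remarks in items (i)--(ii) (in particular $\HK^k_j=\lim_{\epsilon\to 0}\HK^k_{j,\epsilon,g}$, the nested inclusions $\HK^k_{j,\delta,g}\subseteq\HK^k_{l,\epsilon,g}$, and the cup--product observation for $\HK^*_{st}$), together with the same finite--dimensional double--perpendicular argument that gives $\HK^k=\H_k^\perp$ in the preceding proposition. The one step you state only tentatively --- interchanging $\perp$ with the nested limits over $\epsilon$ and $j$ --- is justified exactly as you anticipate, since all spaces involved are subspaces of the finite--dimensional $H^k(M;\Q)$ resp.\ $H_k(M;\Q)$ and the singular pairing is perfect there.
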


Many manifolds satisfy $\HK ^*(M;\Q )=\HK _{st}^*(M;\Q )$, but a general proof for this equality or a counterexample are unknown. In case there are manifolds $M$ with $\HK ^*(M;\Q )\neq \HK _{rs}^*(M;\Q )$, $\HK ^{*}_{rs}(M;\Q )$ has advantages over $\HK ^*(M;\Q )$ concerning the Novikov conjecture, and $\HK ^*_{st}$ (respectively $\H _*^{st}$) provides stronger obstructions to positive scalar curvature (cf.\cite{List10}). However, under surgery the functors $\HK _{st}^*$, $\HK _{rs}^*$ may be less well behaved than $\HK ^*$. In fact,  using a suitable relative version of $\H _*^{st}$  the proof of theorem 1.1 in \cite{pre_List12} breaks down precisely in lemma 4.1. That is why we keep working with the ordinary and the stabilized versions. In order to avoid notational difficulties in the observations below we also introduce the ring stabilized K--area homology. In fact, $\H _k^{rs}(M;\Q )$ consists of homology classes $\theta \in H_k(M;\Q )$ with $\theta \times T^j\in \H _{k+j}(M\times T^j;\Q )$ for all $j$. Of course, $\HK ^*_{rs}(M;\Q )=\H _*^{rs}(M;\Q )^\perp $. 

Since we are interested in statements for classifying spaces $B\pi $ we extend the functors of K--area homology to the category of topological spaces. In fact, if $X$ is a topological space define
\[
\H _k^{\omega  }(X;\Q )=\{ f_*(\theta ) \in H_k(X;\Q )\ |\ f:M\to X, \theta \in \H _k^{\omega }(M;\Q )\} 
\] 
where $f:M\to X$ denotes continuous maps between compact smooth manifolds $M$ and $X$. In this case and below, ''$\omega  $'' my be blank, ring stabilized ''rs'' or stabilized ''st'', i.e.~$\H ^{\omega  }_k(X;\Q )$ stands for the usual K--area homology $\H _k(X;\Q )$ or its stabilized versions $\H _k^{rs}(X;\Q )$, $\H _k^{st}(X;\Q )$. $\H _k^\omega  (X;\Q )$ is a subspace of $H _k(X;\Q )$ as addition is obtained by the disjoint union of compact manifolds. Analogously, the cohomology of infinite K--area of $X$ is given by
\[
\begin{split}
\HK ^k_{\omega  }(X;\Q )&=\H _k^{\omega  }(X;\Q )^\perp \\
&=\{ \alpha \in H^k(X;\Q )\ |\ \forall f:M\to X: f^*(\alpha )\in \HK ^k_\omega (M;\Q )  \}
\end{split}
\]
which is clearly a subspace of $H^k(X;\Q )$. Additionally to the results in \cite{List10,pre_List12} we summarize a few facts:
\begin{enumerate}
\item $\H _*^{\omega  }(\, .\, ;\Q )$ is a functor from the category of topological spaces and continuous maps to the category of rational vector spaces. $\HK ^*_{\omega  }(\, .\, ;\Q )$  is a cofunctor on theses categories.
\item Homotopy invariance: If $f,h:Y\to X$ are homotopic, then $f_*=h_*:\H _k^{\omega  }(Y;\Q )\to \H _k^{\omega  }(X;\Q )$ and $f^*=h^*:\HK ^k_{\omega  }(X;\Q )\to \HK ^k_{\omega  }(Y;\Q )$ hold for all $k$.
\item $\HK ^i_{\omega  }(X;\Q )=H^i(X;\Q )$ and $\H ^{\omega  }_i(X;\Q )=0$ hold for $i=0$ and $i=1$.
\item \label{fact4} Additivity: If $X=\bigvee _\alpha X_\alpha $ is a wedge sum, then the inclusions $i_\alpha :X_\alpha \to X$ provide isomorphisms
\[
 \bigoplus _\alpha \H _k^{\omega  }(X_\alpha ;\Q )\cong \H _k^{\omega  }(X;\Q )
\]
\item \label{fact5} $\HK ^*_{rs}(X;\Q )$, $\HK ^*_{st}(X;\Q )$ and $\HK ^{2*}(X;\Q )$ are rings with respect to the cup product. In fact, the cohomology cross product satisfies
\[
\HK _{\omega }^k(X;\Q )\otimes \HK _{\omega }^l(Y;\Q )\subseteq \HK _{\omega }^{k+l}(X\times Y;\Q )
\]
for $\omega \in \{ rs ,st\} $ and all $k,l$. 
\item \label{fact55} The cap product of singular homology/cohomology satisfies
\[
\HK ^{j}_\omega (X;\Q )\times \H _k^\omega (X;\Q )\stackrel{\cap }\longrightarrow \H _{k-j}^\omega (X;\Q ),\ (\alpha ,\theta )\mapsto \alpha \cap \theta 
\]
for all $j,k$ if $\omega \in \{ rs,st\}$.  $\HK ^j(X;\Q )\times \H _k(X;\Q )\stackrel{\cap }\to \H _{k-j}(X;\Q )$ is well defined if $j(k-j)$ is even. 

\item \label{fact6} If $\tilde X\to X$ is a finite covering, then $f_*:\H _k^{\omega  }(\tilde X;\Q )\to \H _k^{\omega  }(X;\Q )$ is surjective for all $k$. Thus, $\H _*^{\omega  }(\tilde X;\Q )=0$ implies $\HK ^*_{\omega  }(X;\Q )=H^*(X;\Q )$. 
\item \label{fact7}If $N$ is a compact manifold with finite fundamental group and $X$ a topological space, then $\HK _{\omega  }^k(X\times N;\Q )=\HK ^k_{\omega  }(X;\Q )\otimes H^0(N;\Q )$ for all $k$.
\end{enumerate}
\begin{proof}
(1), (2) and (3) are obvious and will be left to the reader.

(\ref{fact4}) The inclusions $i_\alpha :X_\alpha \to X$ and the projections $p_\alpha :X\to X_\alpha $ (here everything outside $X_\alpha $ is mapped to the basepoint $x_\alpha \in X_\alpha $) satisfy $p_\alpha \circ i_\alpha =\mathrm{id}_{X_\alpha }$. $(\bigoplus _\alpha i_\alpha )_*$ is an isomorphism for singular homology if $k>0$, i.e.~functorial reasons provide the claim.  

(\ref{fact5}) Given $\alpha ,\beta \in \HK ^{2*}(X;\Q )$, we have to show that $f^*(\alpha \cup \beta )\in \HK ^{2*}(M;\Q )$ for all $f:M\to X$. Since $f^*(\alpha ),f^*(\beta )\in \HK ^{2*}(M;\Q )$ for all $f:M\to X$, the claim follows from lemma  \ref{lemma22} and $f^*\alpha \cup f^*\beta =f^*(\alpha \cup \beta )$. $\HK ^*_{st}(X;\Q )$ and $\HK ^*_{rs}(X;\Q )$ are rings by the same argument, because $\HK _{st}^*(M;\Q )$ and $\HK ^*_{rs}(M;\Q )$ are rings for all compact $M$. For the second statement we use the projections $p_X:X\times Y\to X$ and $p_Y:X\times Y\to Y$ to show $p_X^*(\alpha )=\alpha \times 1\in \HK ^*_{\omega }(X\times Y;\Q )$ and $p_Y^*(\beta )=1\times \beta \in \HK ^*_{\omega }(X\times Y;\Q )$ for all $\alpha \in \HK ^*_{\omega }(X;\Q )$ and $\beta \in \HK ^*_{\omega }(Y;\Q )$. Hence, the ring structure of $\HK ^*_{\omega }(X\times Y;\Q )$ and
\[
\alpha \times \beta =p_X^*(\alpha )\cup p_Y^*(\beta )\in H^*(X\times Y;\Q )
\] 
provide the claim if $\omega =\{ rs,st\} $.

(\ref{fact55}) Given $\alpha \in \HK ^j_\omega (X;\Q )$ and $\theta \in \H _k^\omega (X;\Q )$, then $\alpha \cup \beta \in \HK ^{k}_\omega (X;\Q )$ for all $\beta \in \HK ^{k-j}_\omega (X;\Q )$ yields
\[
0=\left< \alpha \cup \beta ,\theta \right> =\left< \beta ,\alpha \cap \theta \right> ,
\]
for all $\beta \in \HK ^{k-j}_\omega (X;\Q )$, i.e.~$\alpha \cap \theta \in \H _{k-j}^\omega (X;\Q )$. The same is true for $\HK ^j$ and $\H _k$ if $j$ or $k-j$ is even.

(\ref{fact6}) Note at first that proposition 3.2 in \cite{List10} holds in the nonorientable case using the topological transfer map, in fact $h_*:\H _k^\omega (\tilde M;\Q )\to \H _k^\omega (M;\Q )$ is surjective for finite coverings $h:\tilde M\to M$ of a compact manifold $M$. Given $\theta \in \H _k^\omega (X;\Q )$, there is some $g:M\to X$ and $\eta \in \H _k^\omega (M;\Q )$ with $g_*(\eta )=\theta $. At first we construct a manifold $\tilde M$ which ''covers'' the path components of $M$ such that $g$ lifts to a map $b:\tilde M\to \tilde X$:
\[
\begin{xy}
\xymatrix{\tilde M\ar@{.>}[d]^{h }\ar@{.>}[r]^{b} &\tilde X \ar[d]^f \\
M\ar[r]^{g}&X}
\end{xy}
\]
Since $M$ is compact, $\pi _0(M)$ is represented by a finite set of points $p_1,\ldots ,p_s\in  M$. Choose $\tilde x_i\in f^{-1}(g(p_i))$ and consider the subgroups $H_i:= g_{\#}^{-1}(f_{\#}(\pi _1(\tilde X,\tilde x_i)))$, then $H_i\subseteq \pi _1(M,p_i)$ has finite index for all $i=1,\ldots ,s$. Hence, each path component $M_i$ of $M$ has a finite covering $h_i:\tilde M_i\to M_i$ with $(h_i)_{\#} (\pi _1(\tilde M_i,\tilde p_i))=H_i$ where $h_i(\tilde p_i)=p_i$. Consider the manifold $\tilde M=\coprod _i\tilde M_i$ and the map $h:\tilde M\to M $ induced by the coverings $h_i$, then $h_*:\H _k^\omega (\tilde M;\Q )\to \H _k^\omega (M;\Q )$ is surjective because $(h_i)_*$ is surjective for all $i$. Moreover, $g\circ h$ satisfies  $g_{\#}h_{\#}(\pi _1(\tilde M,\tilde p_i))\subseteq f_{\#}(\pi _1(\tilde X,\tilde x_i))$ for all $i$. Hence, there is a lift $b:\tilde M\to \tilde X$ of the map $g\circ h:\tilde M\to X$. As $h_*:\H _k^\omega (\tilde M;\Q )\to \H _k^\omega (M;\Q )$ is surjective, we can choose $\tilde \eta \in \H _k^\omega (\tilde M;\Q )$ with $h_*(\tilde \eta )=\eta $. Thus,
\[
(f\circ b)_*(\tilde \eta )=(g\circ h)_*(\tilde \eta )=g_*(\eta )=\theta ,
\]
and $\tilde \theta :=b_*(\tilde \eta )$ provide $f_*(\tilde \theta )=\theta $, i.e.~$f_*$ is surjective. 

(\ref{fact7}): Without loss of generality $N$ is connected, the general case follows from the additivity axiom. If $X=M$ is a compact manifold, the claim follows from proposition 3.2(iii) in \cite{pre_List12}. The inclusion $\supseteq $ is obvious. Considering inclusions $i_1:X\to X\times N$ and $i_2:M\to M\times N$ for a point $x\in N$, then $f^*i_1^* =i_2^*(f\times \mathrm{id}_N)^* $ for all $f:M\to X$. If $\theta \in \HK ^{k}_\omega (X\times N;\Q )$, $M$ is compact and $f:M\to X$ is continuous, then $(f\times \mathrm{id}_N)^*\theta \in \HK ^k_\omega (M\times N;\Q )\stackrel{i_2^*}= \HK ^k_\omega (M;\Q )$ which implies $f^*i^*_1\theta \in \HK ^k_\omega (M;\Q )$ for all $f:M\to X$, hence $i_1^*\theta \in \HK ^k_\omega (X;\Q )$. 
\end{proof}
In analogy to the observations in \cite{Ros1} we are able to conclude some basic properties for the classifying spaces $B\pi $ if $\pi $ is a discrete group. In fact, the following proposition shows $\H _*(B\pi ;\Q )=0$ for many discrete groups $\pi $.
\begin{prop}
\begin{enumerate}[(a)]
\item If $\pi _1\subset \pi $ has finite index and $\H _*^{\omega  }(B\pi _1;\Q )=0$, then $\H _*^{\omega  }(B\pi ;\Q )=0$. This follows immediately from fact (\ref{fact6}) using the finite covering $B\pi _1\to B\pi $.
\item Let $\pi _1\subset \pi $ be a finite normal subgroup with $\H _*^{\omega  }(B(\pi /\pi _1);\Q )=0$, then $\H _*^{\omega  }(B\pi ;\Q )=0$. Consider the map $f:B\pi \to B(\pi /\pi _1)$ induced by the homomorphism $\pi \to \pi /\pi _1$, then $f_*:H_k(B\pi ;\Q )\to H_k(B(\pi /\pi _1);\Q )$ is injective, i.e.~functoriallity of $\H _*^\omega  $ completes the proof.
\item \label{refc}Let $H_k(B\pi _2;\Q )$ be finite dimensional for all $k$ and $\H _*^{rs}(B\pi _i;\Q )=0$ for $i=1,2$, then $\H _*^{rs}(B(\pi _1\times \pi _2);\Q )=0$. Here we use the K\"unneth theorem for rational cohomology and fact (\ref{fact5}) above. The same is true for $\H ^{st}_*$.
\item If $\H _*^{\omega  }(B\pi _i;\Q )=0$ for $i=1,2$, then the free product $\pi :=\pi _1*\pi _2$ satisfies $\H _*^{\omega  }(B\pi ;\Q )=0$. This follows from $B(\pi _1*\pi _2)=B\pi _1\vee B\pi _2$ and fact (\ref{fact4}).
\end{enumerate}
\end{prop}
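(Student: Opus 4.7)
The proof naturally splits into the four cases and the author has indicated the key ingredient for each, so my plan is to verify that each hint really does go through with the tools already assembled in this section. For (a), recall that a finite-index subgroup $\pi_1\subset \pi$ realizes $B\pi_1\to B\pi$ as a finite covering. Fact (\ref{fact6}) applied to $\tilde X=B\pi_1$, $X=B\pi$ asserts surjectivity of $\H^\omega_k(B\pi_1;\Q)\to \H^\omega_k(B\pi;\Q)$. Since the source vanishes by hypothesis, so does the target.

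For (b), the short exact sequence $1\to \pi_1\to \pi\to \pi/\pi_1\to 1$ gives a fibration $B\pi_1\to B\pi\to B(\pi/\pi_1)$. Because $\pi_1$ is finite we have $H_k(B\pi_1;\Q)=0$ for $k>0$, so the rational Serre spectral sequence collapses and $f_*\colon H_k(B\pi;\Q)\to H_k(B(\pi/\pi_1);\Q)$ is an isomorphism (in particular injective). Since $\H^\omega_*$ is by definition a subfunctor of singular homology, the restriction of $f_*$ to $\H^\omega_k(B\pi;\Q)$ is still injective and lands in $\H^\omega_k(B(\pi/\pi_1);\Q)=0$, forcing $\H^\omega_k(B\pi;\Q)=0$.

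For (c) use $B(\pi_1\times \pi_2)=B\pi_1\times B\pi_2$. The finiteness hypothesis on $H_*(B\pi_2;\Q)$ is exactly what one needs to apply the rational Künneth theorem in cohomology, so every class in $H^n(B\pi_1\times B\pi_2;\Q)$ is a finite sum of cross products $\alpha\times\beta$ with $\alpha\in H^k(B\pi_1;\Q)$, $\beta\in H^{n-k}(B\pi_2;\Q)$. The hypothesis $\H^{rs}_*(B\pi_i;\Q)=0$ is equivalent (by definition of $\HK^*_\omega$ as an orthogonal complement) to $\HK^*_{rs}(B\pi_i;\Q)=H^*(B\pi_i;\Q)$, so by fact (\ref{fact5}) each cross product lies in $\HK^{n}_{rs}(B\pi_1\times B\pi_2;\Q)$. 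Summing gives $\HK^*_{rs}(B(\pi_1\times\pi_2);\Q)=H^*(B(\pi_1\times\pi_2);\Q)$, i.e.\ $\H^{rs}_*(B(\pi_1\times\pi_2);\Q)=0$. The same argument, reading $\HK^*_{st}$ for $\HK^*_{rs}$, handles the stabilized version.

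For (d) recall that $B(\pi_1*\pi_2)\simeq B\pi_1\vee B\pi_2$; fact (\ref{fact4}) then yields $\H^\omega_*(B(\pi_1*\pi_2);\Q)\cong \H^\omega_*(B\pi_1;\Q)\oplus \H^\omega_*(B\pi_2;\Q)=0$. The main obstacle, such as it is, lies in (c): one must be careful that the finite-dimensionality hypothesis on $H_*(B\pi_2;\Q)$ is invoked only to guarantee the Künneth decomposition, and that the passage from cross products back to cup products in $\HK^*_{rs}$ (or $\HK^*_{st}$) really uses the ring-stabilized framework of fact (\ref{fact5}) — this is exactly why the plain version $\HK^*$ is excluded from the statement.
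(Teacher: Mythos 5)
Your proposal is correct and follows essentially the same route as the paper, which proves each part exactly via the hints embedded in the statement: fact (6) for (a), injectivity of $f_*$ plus functoriality for (b), the rational cohomology K\"unneth theorem together with fact (5) and the orthogonality $\HK^*_{\omega}=\H_*^{\omega}{}^{\perp}$ for (c), and the wedge decomposition with fact (4) for (d). Your added justifications (the Serre spectral sequence collapse in (b), the role of finite-dimensionality in (c)) are standard fillings-in of what the paper leaves implicit, not a different argument.
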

\section{The $L$--class and K--area homology}
\label{sec3}
The $L$--class of a manifold $M$ is a polynomial in the rational Pontryagin classes of its tangent $TM$. Novikov proved in \cite{Nov0} that the rational Pontryagin classes and therefore the Hirzebruch $L$--class $L(M)$ depend only on the homeomorphism type of the manifold. The geometric relevance of $L(M)$ enters as the index of (twisted) signature operators. In fact, $\int _ML(M)$ is an integer and coincides with the signature of a closed oriented manifold. This yields the homotopy invariance of $L_k(M)$ if $\dim M=4k$.  Moreover, Novikov also showed in \cite{Novikov} the homotopy invariance of $L_k(M)\in H^{4k}(M;\Q )$ if $M$ has dimension $4k+1$. However, in general the total $L$--class is not invariant under homotopy equivalences. Below we give necessary and sufficient conditions for manifolds $M$ having a homotopy invariant $L$--class.  

Given an orientation preserving homotopy equivalence $f:M'\to M$ between closed manifolds and a bundle $\bundle{E}\to M$. We consider the twisted signature operators
\[
D^M_{\bundle{E}}:\Gamma (\Lambda ^+TM\otimes \bundle{E})\to \Gamma (\Lambda ^-TM \otimes \bundle{E})
\]
and $D^{M'}_{f^*\bundle{E}}$ on $M$ respectively $M'$.  Hilsum and Skandalis proved in \cite{HilSka} the existence of $\epsilon >0$ such that $\| R^\bundle{E}\| _g<\epsilon $ implies
\[
\left< L(M)\cdot \rho (\mathrm{ch} (\bundle{E})),[M]\right> =\mathrm{ind}(D^M_{\bundle{E}})=\mathrm{ind}(D^{M'}_{f^*\bundle{E}})=\left< L(M')\cdot f^*\rho (\mathrm{ch}(\bundle{E})),[M']\right> .
\]
Here $\rho $ denotes multiplication by $2^k$ on $H^{2k}(M;\Q )$ for all $k$ (cf.~\cite{LaMi} for Atiyah--Singer). Note that the $\epsilon $ depends on the homotopy equivalence $f$ and the choice of a Riemmanian metric $g$ on $M$. Thus, we obtain for all bundles $\bundle{E}\in \mathscr{V}_{\epsilon ,g}(M)$
\[
\left< \rho (\mathrm{ch}(\bundle{E})),L(M)\cap [M]-f_*(L(M')\cap [M'])\right> =0
\]
which yields the following proposition.

\begin{prop}
Let $f:M'\to M$ be an orientation preserving homotopy equivalence of closed oriented manifolds, then the Poincar\'e duals of the $L$--classes satisfy
\[
L(M)\cap [M]-f_*(L(M')\cap [M'])\in \H _{n-4*}^{rs}(M;\Q ).
\]
Alternatively
\[
\left< \alpha \cdot L(M),[M]\right> =\left< f^*\alpha \cdot L(M'),[M']\right> 
\]
holds for all $\alpha \in \HK ^*_{rs}(M;\Q )$.
\end{prop}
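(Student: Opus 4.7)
The plan is to apply the Hilsum--Skandalis identity recalled above not on $M$ alone but on every product $M\times T^j$, using the parallelizability of the torus to convert the resulting data into the ring stabilized conclusion.

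For each $j\geq 0$ the map $f\times \mathrm{id}:M'\times T^j\to M\times T^j$ is again an orientation preserving homotopy equivalence, so Hilsum--Skandalis provides $\epsilon_j>0$ such that
\[
\bigl\langle \rho(\mathrm{ch}(\bundle{E})),\, L(M\times T^j)\cap [M\times T^j]-(f\times \mathrm{id})_*(L(M'\times T^j)\cap [M'\times T^j])\bigr\rangle = 0
\]
for every $\bundle{E}\in \mathscr{V}_{\epsilon_j,g\oplus h}(M\times T^j)$. Since $T^j$ is parallelizable, $L(T^j)=1$, and multiplicativity of the $L$--class together with the product rule for cap products reduce the difference of Poincar\'e duals to $\xi\times [T^j]$, where $\xi := L(M)\cap[M]-f_*(L(M')\cap [M'])$.

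The Chern characters of bundles in $\mathscr{V}_{\epsilon_j,g\oplus h}(M\times T^j)$ generate a subspace containing $\HK^{2*}(M\times T^j;\Q)$, so $\xi\times [T^j]$ annihilates $\HK^{2*}(M\times T^j;\Q)$. To pick up the odd part, one runs the same argument one level higher: the definition $\HK^{2*+1}(M\times T^j;\Q)=\{\alpha:\alpha\times [S^1]^*\in \HK^{2*}(M\times T^{j+1};\Q)\}$ combined with K\"unneth reduces $\langle \alpha,\xi\times [T^j]\rangle$ (up to sign) to $\langle \alpha\times [S^1]^*,\xi\times [T^{j+1}]\rangle$, which vanishes by the even case at level $j+1$. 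Combined with the finite dimensional orthogonality $\H_k=(\HK^k)^\perp$ from the preceding proposition, this places $\xi\times [T^j]\in \H_*(M\times T^j;\Q)$ for every $j$, so by definition $\xi\in \H_{n-4*}^{rs}(M;\Q)$.

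The alternative formulation now follows formally: $\HK^*_{rs}=(\H_*^{rs})^\perp$ reduces it to $\langle \alpha,\xi\rangle=0$ for all $\alpha\in \HK^*_{rs}(M;\Q)$, which has just been established. The adjoint identity $\langle \alpha\cup\beta,[X]\rangle=\langle \alpha,\beta\cap [X]\rangle$ together with naturality of the Kronecker pairing under $f_*$ then convert this into the displayed equality. The main technical hurdle is the odd cohomology part: Hilsum--Skandalis directly constrains only pairings against even degree Chern characters, so one must set things up on $M\times T^j$ for \emph{all} $j$ simultaneously so that odd classes on $M\times T^j$ are absorbed as even classes on $M\times T^{j+1}$---this is exactly the point at which ring stabilization enters.
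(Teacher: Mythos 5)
Your proposal is correct and follows essentially the same route as the paper: apply the Hilsum--Skandalis index identity to $f\times\mathrm{id}:M'\times T^j\to M\times T^j$ for each $j$, use $L(T^j)=1$ to identify the resulting difference of Poincar\'e duals with $\xi\times[T^j]$, and conclude $\xi\times[T^j]\in\H_*(M\times T^j;\Q)$ for every $j$ via the duality $\H_k=(\HK^k)^\perp$, which is precisely the definition of $\H^{rs}_*$. Your explicit handling of the odd-degree components by passing to $M\times T^{j+1}$ is just an unwinding of the definition of $\HK^{2*+1}$ that the paper streamlines by choosing $i$ with $i+\dim M$ even, and your indexing $\epsilon_j$ correctly reflects the paper's remark that the dependence of $\epsilon$ on $j$ is exactly why one obtains $\H^{rs}_*$ rather than $\H^{st}_*$.
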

\begin{proof}
Choose $i$ such that $i+\dim M$ is even and consider the orientation preserving homotopy equivalence $f\times \mathrm{id}:M'\times T^i\to M\times T^i$. The result by Hilsum and Skandalis yields $\epsilon =\epsilon (g\oplus h,i)>0$ with the property
\[
0=\left< \mathrm{ch}(\bundle{E}),\bigl( \rho _i\bigl( L(M)\cap [M]-f_*(L(M')\cap [M'])\bigl) \bigl) \times [T^i]\right> 
\]
for all $\bundle{E}\to M\times T^i$ with curvature $\| R^\bundle{E}\| _{g\oplus h}<\epsilon $, here $\rho _i$ is multiplication by $2^{\frac{i+k}{2}}$ on $H_k(M;\Q )$. This yields $\bigl( L(M)\cap [M]-f_*(L(M')\cap [M'])\bigl) \times [T^i]\in \HK _{2*}(M\times T^i;\Q )$ for all $i$ which completes the proof. Note the problem trying to prove this proposition for $\H ^{st}_*$: the choice of $\epsilon >0$ depends on $i$.
\end{proof}
Hence, assuming $\H _{n-4k}^{rs}(M;\Q )=0$ for $k>0$ yields $L_k(M')=f^*L_k(M)$ for all homotopy equivalences $f:M'\to M$. Together with proposition \ref{prop23} this proves theorem \ref{mainthm}(2). Of course, $\H _{n-4k}^{rs}(M;\Q )=0$ for all $k>0$ implies the homotopy invariance of $L(M)$. Let $\pi $ be a discrete group with $\H _*^{rs}(B\pi ;\Q )=0$, $h:M\to B\pi $ be continuous and $f:M'\to M$ be an orientation preserving homotopy equivalence, then the last proposition provides the Novikov conjecture for $\pi $ and therefore proves theorem \ref{mainthm}(1):
\[
\left< h^*\alpha \cdot L(M),[M]\right> =\left< (h\circ f)^*\alpha \cdot L(M'),[M']\right>  .
\]
for all $\alpha\in \HK ^*_{rs}(B\pi ;\Q )=H^*(B\pi ;\Q )$.

Let $\Gamma _*M$ be the (discrete) set of vectors
\[
f_*(L(M')\cap [M'])-h_*(L(M'')\cap [M''])\in H_*(M;\Q )
\]
where $f:M'\to M$ and $h:M''\to M$ run over all orientation preserving smooth homotopy equivalences to $M$, then the last proposition shows
\[
\Gamma _*M\subset \H _*^{rs}(M;\Q )\subseteq \H _*(M;\Q ). 
\]
Observe that $\Gamma _{*}M$ depends only on the homotopy type of $M$, i.e.~if $g:M\to N$ is a homotopy equivalence between orientable closed manifolds, then $g_*:\Gamma _*M\to \Gamma _*N$ is a bijection. The following theorem is known to experts in various forms, we use a version by Davis. Notice that the assumption $n>4$ is superfluous in our present situation, and the proof needs only the functoriallity of $\H ^\omega _*$ and \cite[theorem 6.5]{Davis}.
\begin{thm}[\cite{Davis}]
Let $M^n$ be oriented and closed with $\H _*^\omega (B\pi _1(M);\Q )=0$, then for any $\theta \in \bigoplus\limits _{k>0}\H _{n-4k}^\omega (M;\Q )$ there is an integer $r\neq 0$ such that $r\Z \cdot \theta \subset \Gamma _*M$. 
\end{thm}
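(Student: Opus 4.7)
The plan is to combine functoriality of $\H ^\omega _*$ with Davis's theorem 6.5, which handles the analogous realization statement in ordinary rational homology. Let $c : M \to B\pi _1(M)$ be the classifying map of the universal cover; by functoriality (fact (1) in section \ref{sec2}), $c_*\theta $ lies in $\H ^\omega _*(B\pi _1(M);\Q )$, which vanishes by hypothesis. Since $\H ^\omega _*(\, .\, ;\Q )$ is by construction a subfunctor of ordinary rational homology, we also obtain $c_*\theta = 0$ in $H_*(B\pi _1(M);\Q )$, which is the precise input required by Davis.

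Before invoking Davis, one should observe why the standing restriction $n > 4$ in \cite[theorem 6.5]{Davis} is not felt here. For $n \leq 4$ and any $k > 0$ the index $n - 4k$ is at most $0$; by fact (3) of section \ref{sec2} we have $\H ^\omega _i(M;\Q ) = 0$ for $i \in \{ 0, 1 \}$, and for $i < 0$ this group is trivially zero. Hence the direct sum in which $\theta $ lives is already $0$, and the conclusion holds vacuously with $r = 1$ realized by $f = h = \mathrm{id}_M$.

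For $n \geq 5$, theorem 6.5 of \cite{Davis} produces, for any class in $\bigoplus _{k>0} H_{n-4k}(M;\Q )$ whose image under $c_*$ vanishes in $H_*(B\pi _1(M);\Q )$, a nonzero integer $r$ together with orientation preserving smooth homotopy equivalences realizing $r$ times the class as an element of $\Gamma _*M$; moreover the set of realizable classes is closed under integer multiples, via iterated constructions on the rational smooth structure set of $M$. Applied to our $\theta $ this yields $r\Z \cdot \theta \subset \Gamma _*M$. The substantive work lies entirely in the cited reference: Davis's theorem itself rests on Sullivan's identification $[M, G/O] \otimes \Q \cong \bigoplus _{k>0} H^{4k}(M;\Q )$ together with the rational vanishing of the surgery obstruction detected by $c_*$. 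The K--area contribution is only the one-line functorial translation into Davis's hypothesis carried out in the first paragraph.
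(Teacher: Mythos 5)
Your proposal is correct and follows essentially the same route as the paper, which explicitly states that the proof requires only the functoriality of $\H_*^\omega$ (to conclude $c_*\theta =0$ in $H_*(B\pi_1(M);\Q)$ from the vanishing hypothesis) together with \cite[theorem 6.5]{Davis}, and which likewise notes that the restriction $n>4$ is superfluous here. Your vacuousness argument for $n\leq 4$ via $\H_0^\omega=\H_1^\omega=0$ is exactly the justification the paper has in mind.
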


Observe that the condition $k>0$ is essential because $\Gamma _nM=0$ for all $M^n$ whereas $\H _n(M;\Q )$ is nontrivial in many cases. This theorem completes the proof of theorem \ref{mainthm}. In fact, if $\H _{n-4k}(M;\Q )\neq 0$ for some $k>0$, then $\Gamma _{n-4k}M$ is nontrivial and the last theorem yields an infinite number of (distinct) smooth homotopy equivalences $f:M'\to M$ with $f^*L_k(M)\neq L_k(M')\in H^{4k}(M';\Q )$. 
\begin{cor}
Let $M^n$ be oriented and closed, then $\H _*^{rs}(B\pi _1(M);\Q )=0$ implies
\[
\bigoplus _{k>0}\H ^{rs}_{n-4k}(M;\Q )=\Q \cdot \Gamma _*M.
\]
This is also true for the functor $\H _*$. 
\end{cor}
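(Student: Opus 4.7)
The plan is to establish the two inclusions of the claimed equality separately, after which the parenthetical statement about the functor $\H_*$ follows by running the same argument with $\omega$ blank.

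For $\Q\cdot\Gamma_*M\subseteq\bigoplus_{k>0}\H^{rs}_{n-4k}(M;\Q)$, I would start from the proposition preceding Davis's theorem, which asserts that every vector $L(M)\cap[M]-f_*(L(M')\cap[M'])$ lies in $\H^{rs}_{n-4*}(M;\Q)$. Since a generic element of $\Gamma_*M$ is a difference of two such vectors, the inclusion $\Gamma_*M\subseteq\bigoplus_{k\geq 0}\H^{rs}_{n-4k}(M;\Q)$ is immediate. To cut the sum down to $k>0$, I would observe that $L_0=1$ and that any orientation-preserving homotopy equivalence satisfies $f_*[M']=[M]$, so the top-degree ($k=0$) component of every difference in $\Gamma_*M$ cancels — this is exactly the observation already flagged in the text after Davis's theorem. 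Passing to $\Q$-spans then finishes this inclusion.

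For the reverse inclusion $\bigoplus_{k>0}\H^{rs}_{n-4k}(M;\Q)\subseteq\Q\cdot\Gamma_*M$, the argument is a one-line application of Davis's theorem with $\omega=rs$: given any $\theta$ in the left-hand side, the standing hypothesis $\H^{rs}_*(B\pi_1(M);\Q)=0$ supplies a nonzero integer $r$ with $r\theta\in\Gamma_*M$, and therefore $\theta=r^{-1}(r\theta)\in\Q\cdot\Gamma_*M$.

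The analogous statement for the unstabilized functor $\H_*$ requires no new ingredients. The proposition supplies $\Gamma_*M\subseteq\H^{rs}_{n-4*}(M;\Q)\subseteq\H_{n-4*}(M;\Q)$ (using $\H^{rs}_k\subseteq\H_k$, which follows by taking $j=0$ in the definition of $\H^{rs}$), while Davis's theorem applied with blank $\omega$ — under the corresponding hypothesis $\H_*(B\pi_1(M);\Q)=0$ — supplies the matching inclusion in the opposite direction. I do not anticipate any genuine obstacle: the analytic input is already packaged in the Hilsum–Skandalis-based proposition, the surgery-theoretic converse is Davis's theorem, and the only mildly delicate point is the cancellation of the top-degree component of $\Gamma_*M$, which is precisely what forces the direct sum to be taken over $k>0$ rather than $k\geq 0$.
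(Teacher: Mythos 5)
Your proposal is correct and follows the same route the paper intends: the inclusion $\Q\cdot\Gamma_*M\subseteq\bigoplus_{k>0}\H^{rs}_{n-4k}(M;\Q)$ comes from the Hilsum--Skandalis-based proposition (together with the vanishing of the top-degree component, which the paper also flags), and the reverse inclusion is Davis's theorem with $\omega=rs$ (respectively blank). The paper gives no separate proof of the corollary precisely because it is this immediate combination, so nothing is missing from your argument.
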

It remains to prove corollary \ref{main_cor}. Let $g:M\to M$ be an orientation preserving homotopy equivalence, then $g_*=\pm \mathrm{Id} $ on $\H _{n-4k}(M;\Q )$ if $\dim \H _{n-4k}(M;\Q )=1$ (consider the integral version on $\H _{n-4k}(M)$ and $\H _{n-4k}(M)\otimes \Q =\H _{n-4k}(M;\Q )$, here we use $\mathrm{Gl}_1(\Z )=\Z _2$). Hence, under the assumptions of corollary \ref{main_cor}, $g_*-\tilde g_*$ vanishes or is multiplication by $\pm 2$ on $\H _{n-4k}(M;\Q )$ for homotopy equivalences $g,\tilde g:M\to M$. Thus, if $L_{k}(M)\cap [M]\in \H _{n-4k}(M;\Q )$, the group of orientation preserving homotopy equivalences $M\to M$ determines at most three points in $\Gamma _{n-4k}M$. Since the conditions in the corollary are homotopy invariant, this remains true for any manifold homotopy equivalent to $M$. Because $\Gamma _{n-4k}M $ has an infinite number of elements, there are infinitely many distinct smooth manifolds with the homotopy type of $M$.

\begin{thm}
\label{thm35}
\begin{enumerate}[(a)]
\item Theorem \ref{mainthm} and corollary \ref{main_cor} remain valid for the functor $\H _*^{rs}$ instead of $\H _*$.
\item Let $M$ be oriented closed with $\H _*^{(rs)}(B\pi _1(M);\Q )=0$. If $j\geq 2$, then
\[
P:\H _k^{(rs)}(M;\Q )\oplus H_{k-j}(M;\Q )\to \Q \cdot \Gamma _k(M^n\times S^j)\ ,\ (\theta ,\eta )\mapsto \theta +\eta \times [S^j]
\]
is a well defined isomorphism for all $k<n+j$ with $n+j-k\in 4\Z $. 
\item If $\H _*(B\pi ;\Q )$ is trivial, then $\H _{*}^{rs}(M;\Q )=\H _{*}(M;\Q )$ holds for all closed manifolds $M$ with fundamental group $\pi $.
\end{enumerate}
\end{thm}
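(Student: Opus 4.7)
The three assertions will be treated in order, with (c) reduced to (b) at the end.

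For part (a), every input used to prove theorem~\ref{mainthm} and corollary~\ref{main_cor} is already available with $\H_*^{rs}$ in place of $\H_*$: the Hilsum--Skandalis proposition above delivers the Poincar\'e dual error vector in $\H_{n-4*}^{rs}(M;\Q)$; Davis's theorem as quoted holds uniformly for all three functors $\H_*^{\omega}$; and the counting argument via $\mathrm{Gl}_1(\Z)=\Z_2$ in the proof of corollary~\ref{main_cor} is insensitive to which variant of K-area homology is used. Line-by-line substitution yields (a); since $\H_*^{rs}\subseteq\H_*$ the hypothesis $\H_*^{rs}(B\pi;\Q)=0$ is a priori weaker than $\H_*(B\pi;\Q)=0$, so the resulting statements are strictly stronger than those of theorem~\ref{mainthm} and corollary~\ref{main_cor}.

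For part (b), $j\geq 2$ gives $\pi_1(M\times S^j)=\pi_1(M)$, so the corollary displayed just before theorem~\ref{thm35} (in both its $\H_*$ and its $\H_*^{rs}$ form, the latter from part~(a)) applies to $M\times S^j$ and yields
\[
\Q\cdot\Gamma_k(M\times S^j)=\H_k^{(rs)}(M\times S^j;\Q)
\]
whenever $k<n+j$ and $n+j-k\in 4\Z$. By fact~(\ref{fact7}) with $N=S^j$ we have $\HK_{(rs)}^k(M\times S^j;\Q)=p_M^*\HK_{(rs)}^k(M;\Q)$. Using the K\"unneth decomposition $H_k(M\times S^j;\Q)=H_k(M;\Q)\oplus H_{k-j}(M;\Q)\cdot[S^j]$, the identity
\[
\langle p_M^*\beta,\theta+\eta\times[S^j]\rangle=\langle\beta,\theta\rangle+\langle\beta,\eta\rangle\cdot\langle 1,[S^j]\rangle=\langle\beta,\theta\rangle
\]
(the second summand vanishing since $1\in H^0(S^j)$ pairs trivially with $[S^j]\in H_j(S^j)$ for $j\geq 1$) shows that $\theta+\eta\times[S^j]\in\H_k^{(rs)}(M\times S^j;\Q)$ iff $\theta\in\H_k^{(rs)}(M;\Q)$, with $\eta$ unconstrained. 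Well-definedness and surjectivity of $P$ follow at once; injectivity is immediate from K\"unneth. The main technical point here is the use of fact~(\ref{fact7}), which needs $\pi_1(N)$ finite, and is precisely what forces $j\geq 2$ in the hypothesis.

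For part (c), the inclusion $\H_*^{rs}\subseteq\H_*$ turns the hypothesis $\H_*(B\pi;\Q)=0$ into $\H_*^{rs}(B\pi;\Q)=0$ as well, so part~(b) is available for both $\omega\in\{\emptyset,rs\}$ simultaneously. For each $k$ with $2\leq k\leq n$ one picks $j\in\{2,3,4,5\}$ with $n+j-k\equiv 0\pmod{4}$ (always possible, since every residue class mod~$4$ is covered in this range), and then the two instances of part~(b),
\[
\H_k(M;\Q)\oplus H_{k-j}(M;\Q)\cong\Q\cdot\Gamma_k(M\times S^j)\cong\H_k^{rs}(M;\Q)\oplus H_{k-j}(M;\Q),
\]
share a common middle term, so a dimension count combined with $\H_k^{rs}(M;\Q)\subseteq\H_k(M;\Q)$ forces equality in degree~$k$. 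The remaining cases $k\in\{0,1\}$ and $k>n$ are trivial by fact~(3) and by dimension reasons respectively.
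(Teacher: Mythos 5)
Your proof follows the paper's own argument in all essentials. For (a) the paper likewise just observes that every ingredient (the Hilsum--Skandalis proposition, which is already stated for $\H_{n-4*}^{rs}$, Davis's theorem quoted for arbitrary $\omega$, and the $\mathrm{Gl}_1(\Z)=\Z_2$ count) goes through verbatim. For (b) the paper simply cites \cite{List10} (Remark 6.3) for the statement that $P$ is injective with image $\H_k^{(rs)}(M\times S^j;\Q)$ and then applies the preceding corollary to $M\times S^j$; your derivation of that image from fact (\ref{fact7}) plus the pairing computation against classes $\beta\times 1$ is a correct, self-contained substitute (minor quibble: $\langle\beta,\eta\rangle$ does not literally make sense for $\beta\in H^k$, $\eta\in H_{k-j}$ --- what vanishes is the whole cross term $\langle\beta\times 1,\eta\times[S^j]\rangle$, for degree reasons, as your parenthetical correctly indicates). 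For (c) your comparison of the two instances of (b) over the common middle term $\Q\cdot\Gamma_k(M\times S^j)$, combined with $\H_k^{rs}(M;\Q)\subseteq\H_k(M;\Q)$, is the same mechanism as the paper's quotient by $P(H_{k-j}(M;\Q))$.

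The one genuine omission is in (c): the statement is asserted for \emph{all} closed manifolds with fundamental group $\pi$, whereas part (b) --- and hence your whole argument for (c) --- requires $M$ oriented. The paper disposes of the nonorientable case in one sentence by reducing to the oriented case via fact (\ref{fact6}): pass to the orientation double cover $h:\tilde M\to M$, apply the oriented case to $\tilde M$, and use surjectivity of $h_*:\H_k^{\omega}(\tilde M;\Q)\to\H_k^{\omega}(M;\Q)$ together with functoriality and $\H_k^{rs}(M;\Q)\subseteq\H_k(M;\Q)$ to conclude $\H_k(M;\Q)=h_*\H_k(\tilde M;\Q)=h_*\H_k^{rs}(\tilde M;\Q)\subseteq\H_k^{rs}(M;\Q)$. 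You should add this reduction (noting that one must also verify the vanishing hypothesis for the classifying space of the index-two subgroup $\pi_1(\tilde M)\subset\pi$) to cover the statement as written.
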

\begin{proof}
a) is obvious by the above considerations.

b) If $j\geq 2$, $P$ is injective with image $\H _k^{(rs)}(M\times S^j;\Q )$ (cf.~\cite[Remark 6.3]{List10}). Hence, the last corollary yields the claim because $k=n+j-4s$ for $s>0$.  

c) Using fact (\ref{fact6}) from section \ref{sec2} it suffices to consider the oriented case. Since $\H _*(B\pi ;\Q )$ and $\H ^{rs}_*(B\pi ;\Q )=0$ vanish, $\H _{n-4k}^{rs}(M;\Q )=\H _{n-4k}(M;\Q )$ for all $k>0$ follows immediately from the last corollary. The remaining cases are obtained by b): Consider the image of $H_{k-j}(M;\Q )$ under $P$ and take the quotient yields
\[
\H ^{rs}_k(M;\Q )\cong \Q \cdot \Gamma _k(M\times S^j) /P(H_{k-j}(M;\Q ))\cong \H _k(M;\Q )
\]
if $j\geq 2$ and $k<n+j$ with $n+j-k\in 4\Z $. Hence, using $j=2,3,4,5$ provides $\H ^{rs}_k(M;\Q )=\H _k(M;\Q )$ for all $k$.
\end{proof}
Part b) of this theorem provides an alternative definition of K--area homology for closed oriented smooth manifolds if $\H _*(B\pi ;\Q )$ vanishes. However, in general it should be significant easier to compute $\H _*(M;\Q )$ by the curvature of vector bundles instead computing $\Gamma _*(M\times S^j)$. 
\section{Outline to Hanke's K--area for Hilbert module bundles}
We want to sketch a generalization of the above results to the HK--area introduced by Hanke in \cite{Hank}, here HK--area  may stand for Hanke's K--area or K--area for Hilbert modules. We leave the details to experts in C$^*$--algebras and KK--theory. As the Chern character between rational K--theory and rational even (co)homology is an isomorphism for compact manifolds, it suffices to work with (co)homology classes. Let $A$ be a C$^*$--algebra and $\bundle{E}\to M$ be a finitely generated Hilbert A--module bundle, then $(\Gamma (\bundle{E}),\phi ,0)$ defines an element in $[\bundle{E}]\in KK(\C ,C(M)\otimes A)$, here $\phi :\C \to \mathcal{B}(\Gamma (\bundle{E}))$ means the standard embedding. Consider the Kasparov product
\[
KK(\C ,C(M)\otimes A)\times K^0(A)\stackrel{\bullet }\longrightarrow K^0(M)\stackrel{ch}\longrightarrow H^{2*}(M;\Q ),
\]
where $KK(A,\C )=K^0(A)$ and $KK(C(M),\C )=K^0(M)$ are identified. Given $\epsilon >0$, we denote by $\HK _{H,\epsilon ,g}^{2*}(M;\Q )\subseteq H^{2*}(M;\Q )$ the linear subspace which is generated by all $\mathrm{ch}([\bundle{E}]\bullet \psi )$ considering all C$^*$-algebras $A$, $\psi \in K^0(A)$ and   finitely generated Hilbert $A$--module bundles $\bundle{E}$ carrying a holonomy representation which is $\epsilon $--close to the identity at scale $\ell $. The \emph{cohomology of infinite HK--area} is defined by
\[
\HK ^{2*}_H(M;\Q )=\lim _{\epsilon \to 0}\HK ^{2*}_{H,\epsilon ,g}(M;\Q ). 
\]
If $\H ^H_{2*}(M;\Q )\subseteq H_{2*}(M;\Q )$ denotes the subspace of rational homology classes with finite HK--area in the sense of \cite[Definition 3.5]{Hank}, one obtains again $\HK ^{2*}_H(M;\Q )=\H ^H_{2*}(M;\Q )^\perp $.  Let $\bundle{E}_i$ be finitely generated Hilbert $A_i$--module bundles for $i=1,2$, and assume that $\bundle{E}_i$ has a holonomy representation $\Ho _i:\mathcal{P}_1(M)\to \Tr (\bundle{E}_i)$ which is $\epsilon $--close to the identity at scale $\ell $, then $\bundle{E}_1\otimes \bundle{E}_2$ is a finitely generated Hilbert $A_1\otimes A_2$--module and $\Ho _1\otimes \Ho _2:\mathcal{P}_1(M)\to \Tr (\bundle{E}_1\otimes \bundle{E}_2)$ determines a holonomy representation which is $2\epsilon $--close to the identity at scale $\ell $. This proves $\alpha \cup \beta \in \HK ^{2*}_{H,2\epsilon ,g}(M;\Q )$ for all $\alpha ,\beta \in \HK ^{2*}_{H,\epsilon ,g}(M;\Q )$ and therefore, $\HK ^{2*}_H(M;\Q )$ is a ring with respect to the cup product. We define the \emph{ring stabilized} versions by
\[
\H ^{Hrs}_k(M;\Q )=\{ \theta \in H_k(M;\Q )\ |\ \theta \times [T^i]\in \H _{k+i}^H(M\times T^i;\Q ) \ \forall i+k\in 2\Z \}
\]
and by $\HK _{Hrs}^k(M;\Q )=\H _k^{Hrs}(M;\Q )^\perp $, then $\HK _{Hrs}^*(M;\Q )$ is a ring with respect to the cup product. We extend $\HK _{H}^*$, $\HK _{Hrs}^*$, $\H ^H_*$ and $\H _*^{Hrs}$ as in section \ref{sec2} to the category of topological spaces. This provides the cup and cap product properties of facts (\ref{fact5}) and (\ref{fact55}) in section \ref{sec2}. Whether the covering result of fact (\ref{fact6}) can be extended to $\H ^H_*$ is left to the reader.
\begin{prop}[\cite{HaSch3}]
Let $M$ be closed, $p:\tilde M\to M$ be the universal covering and $V$ be the kernel of $p^*:H^2(M;\Q )\to H^2(\tilde M;\Q )$. Then the subalgebra generated by $V$ is contained in $\HK _{H}^{2*}(M;\Q )$. Hence, $\H _2^H(M;\Q )$ is determined by the image of the Hurewicz map $\pi _2(M)\otimes \Q \to H _2(M;\Q )$. 
\end{prop}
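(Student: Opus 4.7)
The plan is to establish first that $V\subseteq \HK^{2*}_H(M;\Q)$; the subalgebra claim then follows at once from the ring structure on $\HK^{2*}_H(M;\Q)$ shown earlier in this section, and the Hurewicz statement is a linear algebra duality argument on perps.

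To bring the classifying space into play, I first identify $V$ with the image of the pullback $f^*:H^2(B\pi;\Q)\to H^2(M;\Q)$ along a classifying map $f:M\to B\pi$ for the universal cover, where $\pi:=\pi_1(M)$. This drops out of the Leray--Serre spectral sequence of the fibration $\tilde M\to M\to B\pi$: simple connectivity of $\tilde M$ gives $H^1(\tilde M;\Q)=0$, so $E_2^{0,1}=E_2^{1,1}=0$ and no differential hits $E_2^{2,0}$; thus in total degree two the filtration collapses to $F^2H^2(M;\Q)=E_\infty^{2,0}=H^2(B\pi;\Q)$, and this filtration step coincides with $\ker p^*=V$. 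By functoriality of $\HK^{2*}_H$ on the category of topological spaces it then suffices to realise every class $\beta\in H^2(B\pi;\Z)$ as a nonzero rational multiple of the Chern character of a finitely generated Hilbert module bundle on $B\pi$ whose holonomy representation is $\epsilon$--close to the identity at any prescribed scale $\ell$, and to pull back along $f$. The existence of such a bundle is the Hanke--Schick construction: $\beta$ classifies a central extension $1\to\Z\to \tilde\pi\to \pi\to 1$, and the associated Hilbert $C^*(\Z)$--module bundle on $B\pi$ pulls back to a flat bundle on $B\tilde\pi$. Coarsening the geometry on the $\Z$--factor forces the induced holonomy to be arbitrarily close to the identity at a given scale, while the Kasparov product of its $KK$--class with a distinguished element of $K^0(C^*(\Z))$ has Chern character proportional to $\beta$.

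For the Hurewicz consequence, the identity $\HK^{2*}_H(M;\Q)=\H^H_2(M;\Q)^\perp$ combined with $V\subseteq \HK^{2*}_H(M;\Q)$ forces $\H^H_2(M;\Q)\subseteq V^\perp$ inside $H_2(M;\Q)$. Linear duality between the finite dimensional spaces $H^2(M;\Q)$ and $H_2(M;\Q)$ identifies the annihilator of $\ker p^*$ with $\mathrm{im}\bigl(p_*:H_2(\tilde M;\Q)\to H_2(M;\Q)\bigr)$. Since $\tilde M$ is simply connected, the rational Hurewicz theorem identifies $H_2(\tilde M;\Q)$ with $\pi_2(\tilde M)\otimes\Q=\pi_2(M)\otimes\Q$, and under this identification $p_*$ becomes the rational Hurewicz map of $M$, yielding the stated containment. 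The main obstacle is the Hilbert module construction above: simultaneously controlling the scale $\ell$, the holonomy estimate, and the Kasparov product on finite subcomplexes of $B\pi$ is the technical content drawn from \cite{HaSch3}, and is the one place in the argument where $C^*$--algebraic and $KK$--theoretic input is essential rather than formal.
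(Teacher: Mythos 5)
The paper offers no proof of this proposition at all --- it is quoted from \cite{HaSch3} --- so there is no in-text argument to compare against; judged on its own, your outline is the standard derivation and its formal parts are correct. The Serre spectral sequence of $\tilde M\to M\to B\pi_1(M)$ (with $E_2^{0,1}=E_2^{1,1}=0$ since $\tilde M$ is simply connected) does identify $V=\ker p^*$ with the image of $f^*:H^2(B\pi_1(M);\Q)\to H^2(M;\Q)$; the subalgebra claim then follows from the ring property of $\HK_H^{2*}$ established just before the proposition; and the annihilator computation $\mathrm{Ann}(\ker p^*)=\mathrm{im}\, p_*=\mathrm{im}\bigl(\pi_2(M)\otimes\Q\to H_2(M;\Q)\bigr)$ is the right route to the Hurewicz statement (using that $M$ is compact, $V$ is rationally spanned by pullbacks of integral classes, so reducing to $H^2(B\pi;\Z)$ is legitimate). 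Two caveats. First, the entire analytic content --- producing, for $\beta\in H^2(B\pi;\Z)$ and every $\epsilon,\ell$, a finitely generated Hilbert module bundle whose holonomy is $\epsilon$--close to the identity at scale $\ell$ and whose Kasparov pairing has Chern character a nonzero multiple of $f^*\beta$ --- is deferred to \cite{HaSch3}; the paper does the same, so this is not a defect, but your description of the mechanism (``coarsening the geometry on the $\Z$--factor'') is loose: the actual construction uses the family of twisted Mishchenko-type bundles attached to the central extension, with twisting parameter tending to $0$. Second, your duality argument yields only $\H_2^H(M;\Q)\subseteq\mathrm{im}\bigl(\pi_2(M)\otimes\Q\to H_2(M;\Q)\bigr)$; if ``determined by'' is read as equality (as the paper's parallel remark about $\H_2(M;\Q)$ suggests), the reverse inclusion requires the separate standard fact that spherical classes have finite HK--area (an almost flat Hilbert module bundle restricted to $S^2$ is trivial in K--theory), which does not follow from $V\subseteq\HK_H^{2*}(M;\Q)$ and should be stated explicitly.
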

If $M$ has residually finite fundamental group, this remains true for $\HK ^*(M;\Q )$, in fact $\H _2(M;\Q )$ is given by the image of $\pi _2(M)\otimes \Q \to \H _2(M;\Q )$. Because the result of Hilsum and Skandalis in \cite{HilSka} is proved for Hilbert $A$--module bundles, the observations in section \ref{sec3} and therefore also the main theorem extend to the functors $\H ^H_*$ and $\H _*^{Hrs}$. The definitions show
\[
\begin{array}{ccc}
\HK ^k(M;\Q )&\subseteq &\HK ^k_H(M;\Q ) \\
\binsubseteq &&\binsubseteq \\
\HK ^k_{rs}(M;\Q )&\subseteq &\HK ^k_{Hrs}(M;\Q )
\end{array}
\]
for all $k$. Because theorem \ref{thm35} generalizes to $\H ^{H}_*$ and $\H ^{Hrs}_*$, we obtain equality in this diagram, i.e.~$\HK ^k(M;\Q )=\HK _{Hrs}^k(M;\Q )$ for all $k$, if the fundamental group of $M$ satisfies $\H _*(B\pi _1(M);\Q )=0$.
\bibliographystyle{abbrv}
\bibliography{/home/benutzer/math/bib/bibliothek}

\end{document}